\documentclass[12pt, reqno, a4paper]{article}
\usepackage{amsmath,amssymb,amsthm, todonotes,url,mathtools}
\usepackage{todonotes}
\usepackage[lmargin=35mm,rmargin=35mm,bmargin=30mm,tmargin=30mm]{geometry}
\usepackage{bbm}
\usepackage[inline]{enumitem}
\usepackage{tikz}
\usepackage{subcaption}

\usepackage{authblk}

\title{A note on connected greedy edge colouring}
\author[1]{Marthe Bonamy} 
\affil[1]{LaBRI, CNRS,
  Universit\'e de Bordeaux, Bordeaux, France\footnote{M. Bonamy is supported by ANR Project GrR (\textsc{ANR-18-CE40-0032}).}}

\author[2]{Carla Groenland}
\affil[2]{Mathematical  Institute,  University  of  Oxford,  Oxford  OX2  6GG,  United  Kingdom.}

\author[3]{Carole Muller}
\affil{D\'epartement de Math\'ematique, Universit\'e libre de Bruxelles, Belgium
\footnote{C. Muller is supported by the Luxembourg National Research Fund (FNR) Grant Nr. 11628910 and was supported by the FNRS during her research stay in Bordeaux.}}

\author[1]{Jonathan Narboni}
\affil[1]{LaBRI, CNRS,
  Universit\'e de Bordeaux, Bordeaux, France}

\author[4]{Jakub Pek\'arek}
\affil{Computer Science Institute of Charles University, Prague. \footnote{J. Pek\'arek is supported by GAUK grant 118119 (Algorithms for graphs with restrictions on cycles).}}

\author[5]{Alexandra Wesolek}
\affil{Department of Mathematics, Simon Fraser University, Burnaby, BC, Canada\footnote{A. Wesolek is supported by the Vanier Canada Graduate Scholarships program.}}

\date{\today}

\newtheorem{theorem}{Theorem}

\newtheorem{lemma}[theorem]{Lemma}

\newtheorem*{claim*}{Claim}
\newtheorem{problem}{Problem}

\makeatletter

\makeatother

\makeatletter
\newtheorem*{rep@theorem}{\rep@title}
\newcommand{\newreptheorem}[2]{
\newenvironment{rep#1}[1]{
 \def\rep@title{#2 \ref{##1}}
 \begin{rep@theorem}}
 {\end{rep@theorem}}}
\makeatother

\newtheorem*{conjecture*}{Conjecture}
\newreptheorem{conjecture}{Conjecture}

\theoremstyle{definition}                    

\theoremstyle{remark}   

\newtheorem*{remark*}{Remark}

\numberwithin{equation}{section}

\usepackage{dsfont}

\tikzstyle{vertex}=[circle, draw, fill=black, inner sep=0pt, minimum width=4pt]

\begin{document}

\maketitle
\begin{abstract}
    Following a given ordering of the edges of a graph $G$, the greedy edge colouring procedure assigns to each edge the smallest available colour. The minimum number of colours thus involved is the chromatic index $\chi'(G)$, and the maximum is the so-called Grundy chromatic index. Here, we are interested in the restricted case where the ordering of the edges builds  the graph in a connected fashion. Let $\chi_c'(G)$ be the minimum number of colours involved following such an ordering. We show that it is NP-hard to determine whether $\chi_c'(G)>\chi'(G)$. We prove that $\chi'(G)=\chi_c'(G)$ if $G$ is bipartite, and that $\chi_c'(G)\leq 4$ if $G$ is subcubic.
\end{abstract}
\section{Introduction}
A naive way to colour the vertices of a graph is to consider them one by one and to colour each vertex with the smallest colour that does not appear on any neighbour of it. 
More formally, let $G$ be a graph and $\mathcal{O} = (v_1,\cdots,v_n)$ be a linear ordering of its vertices. 
The \emph{greedy colouring} of $G$ \emph{following $\mathcal{O}$} is the colouring $\alpha$ of $G$ obtained by colouring $v_i$ with the smallest colour $k$ such that there is no $v_j\in N(v_i)$ with $j<i$ and $\alpha(v_j)= k$, for $i$ from $1$ to $n$. 
The maximum number of colours that can be used using a greedy procedure is called the \emph{Grundy number}, and computing this value can be a convenient way to bound any heuristic used to colour a graph (see \cite{bonnet2018complexity} and \cite{zaker2006results}). 
Finding a good ordering of the vertices can indeed seem like an easier way to find a colouring with not ``too many" colours. 
However, if we choose a bad ordering then the difference between the number of colours involved in a greedy colouring and the chromatic number can be arbitrary large, even for trees.

On the other hand, note that there is always an ordering $\mathcal{O}$ of the vertices of a graph $G$ such that the greedy colouring following $\mathcal{O}$ involves the optimal number of colours, i.e. $\chi(G)$. The argument is simple: consider a $\chi(G)$-colouring $\alpha$ of $G$, and put first all the vertices coloured $1$ in $\alpha$, then all the vertices coloured $2$, etc. The greedy colouring following this ordering might not be exactly the same as $\alpha$, but it will use $\chi(G)$ colours in total. Nevertheless, finding such an ordering is equivalent to directly computing an optimal colouring, so this is not a helpful approach.

A more interesting approach is through connected orderings. A \emph{connected ordering} is an ordering where each vertex (except the first one) has one of its neighbours as predecessor -- in other words, $G[\{v_1,\ldots,v_i\}]$ is connected for every $i$. Note that disconnected graphs do not admit a connected ordering: throughout this paper we only consider connected graphs. Indeed, for colouring purposes, one can simply handle each connected component independently. The minimum number of colours used by the greedy procedure when following a connected ordering is called the \emph{connected chromatic number} of $G$ and is denoted $\chi_c(G)$. Surprisingly, the connected chromatic number behaves similarly to the chromatic number. In fact, Benevides, Campos, Dourado, Griffiths, Morris, Sampaio and Silva \cite{BenevidesCamposDouradoGriffithsMorrisSampaioSilva} proved that $\chi_c(G) \leq \chi(G) +1 $ for every graph $G$.

Edge colouring is a special case of vertex colouring which typically displays significantly meeker behaviour. Indeed, while the chromatic number can vary wildly between the best-known lower bounds and best-known upper bounds, Vizing proved in 1964~\cite{vizing1964estimate} that the number of colours needed to colour the edges of a graph $G$ can only be either $\Delta(G)$ or $\Delta(G)+1$, where $\Delta(G)$ is the maximum degree of $G$. The minimum number of colours needed to colour the edges of a graph $G$ such that any two incident edges have different colours is the \textit{chromatic index} of $G$, denoted $\chi'(G)$. Colouring the edges of a graph $G$ corresponds to colouring the vertices of its so-called \emph{line graph}, where the vertices are $E(G)$ and two vertices are adjacent if the corresponding edges are incident in $G$.

Given that edge colouring is a special case of vertex colouring, all the notions discussed earlier extend naturally. Let us denote by $\chi'_c(G)$ the \emph{connected greedy chromatic index} of $G$. The goal of this paper is to study this parameter. By considering vertex colourings of the line graph of $G$, we obtain $\chi'(G)\leq \chi'_c(G)\leq \chi'(G)+1$. In the case of vertex colouring, it is NP-hard to decide whether $\chi_c(G)=\chi(G)$ \cite{BenevidesCamposDouradoGriffithsMorrisSampaioSilva}. To the best of our knowledge it is unknown whether this extends to edge colouring, and even whether $\chi'(G)$ and $\chi'_c(G)$ can differ. It is however known that $\chi(G)$ and $\chi_c(G)$ can differ on claw-free graphs~\cite{le2018connected}. 

Our first contribution is to prove that deciding $\chi'_c(G)=\chi'(G)$ is NP-hard, even for graphs of small maximum degree satisfying $\chi'(G)=\Delta(G)$.
\begin{theorem}
\label{thm:ge:np}
For all $\Delta \geq 4$, it is NP-hard to decide whether $\chi'(G)=\chi_c'(G)$ on the class of graphs with chromatic index $\Delta$.
\end{theorem}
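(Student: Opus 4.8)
The plan is to reduce from the problem of deciding whether a connected $(\Delta-1)$-regular graph is class~$1$ (equivalently, $(\Delta-1)$-edge-colourable), which is NP-hard for every $\Delta-1\ge 3$ by Holyer's theorem for cubic graphs and its extension to $k$-regular graphs by Leven and Galil. So fix $k=\Delta-1\ge 3$, let $H$ be a connected $k$-regular graph, and build in polynomial time a connected graph $G=G(H)$ with $\Delta(G)=\Delta$ and $\chi'(G)=\Delta$ such that $\chi'_c(G)=\Delta$ if and only if $\chi'(H)=k$. Since $\chi'_c(G)\le\chi'(G)+1$ (as observed in the introduction) and $\chi'(G)=\Delta$, this means $\chi'_c(G)=\chi'(G)$ holds exactly when $\chi'(H)=k$, so deciding $\chi'_c(G)=\chi'(G)$ on the class of graphs with chromatic index $\Delta$ is at least as hard as deciding whether a $k$-regular graph is class~$1$.

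The graph $G$ consists of $H$ together with one ``saturation gadget'' $F_v$ attached at each vertex $v$ of $H$ by a single new edge $f_v=vg_v$, where $g_v\in V(F_v)$, plus an auxiliary connected ``backbone'' joining the gadgets to one another, whose sole purpose is to carry connectivity. The gadgets are designed so that: (i) $\Delta(G)=\Delta$, the only vertices of degree $\Delta$ being the vertices of $H$; (ii) for each colour $c\in\{1,\dots,\Delta\}$, $F_v$ admits a proper $\Delta$-edge-colouring in which $g_v$ misses exactly~$c$; and (iii) there is a connected ordering of $E(F_v)$, reachable through the backbone, along which the greedy procedure uses only colours $\{1,\dots,\Delta-1\}$ and has all $\Delta-1$ of them appear at $g_v$ strictly before $f_v$ is processed. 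Property~(ii) yields $\chi'(G)\le\Delta$: take a proper $\Delta$-edge-colouring of $H$, which exists by Vizing since $\Delta(H)=k<\Delta$; give each $f_v$ a colour missing at $v$ in it; colour each $F_v$ so that $g_v$ misses that colour; and colour the backbone greedily. Together with $\chi'(G)\ge\Delta(G)=\Delta$ this gives $\chi'(G)=\Delta$ no matter what $\chi'(H)$ is, so $G$ indeed lies in the stated class.

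For the ``if'' direction assume $\phi$ is a proper $k$-edge-colouring of $H$, and build a connected edge-ordering of $G$ in two phases. Phase~1 processes every backbone edge, every gadget edge and every $f_v$: traversing the backbone and feeding in the orderings from~(iii) appropriately, greedy uses only $\{1,\dots,\Delta-1\}$ inside the gadgets, and since $f_v$ sees colours $1,\dots,\Delta-1$ at $g_v$ and nothing yet at $v$, it receives colour~$\Delta$. After Phase~1 every vertex of $H$ has been reached, so Phase~2 may take the edges of $H$ in \emph{any} order while keeping the current subgraph connected; choose the colour-class order of $\phi$, that is, all $\phi$-colour-$1$ edges, then all $\phi$-colour-$2$ edges, and so on. When an edge $uv$ of $H$ is processed, its already-coloured incident edges are only $f_u,f_v$ (colour $\Delta$) and the $\phi$-smaller-coloured edges of $H$ at $u$ and~$v$; a routine induction (the standard fact that greedy run on a colour-class order never uses more colours than that order has) shows $uv$ receives a colour at most $\phi(uv)\le k$. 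Hence the whole run uses only the $\Delta$ colours $\{1,\dots,\Delta\}$, so $\chi'_c(G)=\Delta$.

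For the ``only if'' direction suppose $\chi'(H)=k+1$ and, towards a contradiction, that some connected greedy ordering of $G$ produces a colouring with at most $\Delta$ colours. The crux is the following rigidity lemma: in any such colouring every $f_v$ receives colour~$\Delta$. Granting it, every edge of $H$ is incident to two edges of colour~$\Delta$, so the greedy colouring restricted to $E(H)$ is a proper edge-colouring of $H$ avoiding colour~$\Delta$, i.e.\ a proper $k$-edge-colouring of $H$, contradicting $\chi'(H)=k+1$; hence no connected greedy ordering of $G$ stays within $\Delta$ colours, and $\chi'_c(G)=\Delta+1$. I expect the rigidity lemma to be the main obstacle. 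The tension to overcome is that $F_v$ must make colour~$\Delta$ unavoidable on $f_v$ no matter when, and from which side, a connected ordering reaches $F_v$, and yet --- by property~(ii), which we needed to guarantee $\chi'(G)=\Delta$ --- it cannot simply forbid colour~$\Delta$ at $g_v$. I would resolve this by engineering $F_v$ together with the backbone so that if greedy colours $f_v$ before the gadget has forced colours $1,\dots,\Delta-1$ onto $g_v$, then $F_v$ can no longer be completed within $\Delta$ colours; this comes down to a finite case analysis of the ways a connected ordering can first touch the edges of $F_v$, the edge $f_v$, and the backbone. The remaining points --- that the reduction is polynomial, that $G$ is connected, and the routine verifications behind (i)--(iii) and the Phase~1 bookkeeping --- are straightforward.
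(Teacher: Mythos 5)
Your reduction skeleton is the same as the paper's: reduce from Leven--Galil ($k$-regular class-1 decision, $k=\Delta-1\ge 3$), attach a gadget by a pendant edge at every vertex of the regular graph, and argue that a connected greedy $\Delta$-colouring exists iff the regular graph is class~1. The ``if'' direction (colour the gadgets first so the pendant edges get colour $\Delta$, then run greedy on the regular graph in the colour-class order of a $k$-edge-colouring) is fine and matches the paper. The problem is that everything hard is left unproved: the gadget $F_v$ satisfying (ii) and (iii) is never constructed, and your ``rigidity lemma'' --- that \emph{every} connected greedy colouring of $G$ with at most $\Delta$ colours gives every $f_v$ colour $\Delta$ --- is exactly the technical heart of the theorem and is only stated as something you ``expect'' to settle by ``engineering $F_v$ together with the backbone'' and ``a finite case analysis.'' In the paper this is precisely the content of the two gadget lemmas: a parity argument on $Q_\Delta^+$ (the two pendant edges of a punctured hypercube must receive the same colour in any $\Delta$-edge-colouring) forces the interface edge's colour, and gluing $\Delta-2$ such copies plus the vertices $u,s$ yields the one-sided rigidity statement. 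Without an explicit gadget and a proof of its rigidity, the ``only if'' direction of your reduction is an unsupported claim, so the proof is incomplete at its core.

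Two further concrete issues with the plan as stated. First, your specification is internally inconsistent: (iii) forces $g_v$ to see all colours $1,\dots,\Delta-1$ inside $F_v$, so $g_v$ has degree $\Delta$ in $G$, contradicting (i); harmless in itself, but symptomatic of the gadget not having been worked out. Second, your rigidity lemma is \emph{stronger} than what the paper needs, and the extra strength is where your sketch is weakest: it must hold even for the gadget containing the very first edge of the ordering (e.g.\ an ordering that begins with $f_v$ itself colours it $1$, and your gadget plus backbone must then make completion within $\Delta$ colours impossible no matter how the rest of $G$ is traversed), and it must hold for every entry point through the backbone. The paper deliberately avoids this: it only needs failure for a gadget whose interface edge is coloured $\Delta$ \emph{before} any gadget edge, and it gets that for free from the pairing trick --- an edge $vv'$ of the regular graph coloured $\Delta$ has two attached gadgets, at most one of which can contain the start of the ordering, and each gadget meets the rest of the graph in a single vertex, so the restriction of the ordering to that gadget is itself a connected ordering starting at the precoloured interface edge. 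If you keep your formulation, you either need a gadget with this much stronger universal property (which you have not shown to exist), or you should weaken the rigidity claim and add an argument of the paper's pairing type; as written, one exceptional vertex $v_0$ whose gadget contains the start would only tell you that $H$ minus one edge is $k$-colourable, which does not contradict $\chi'(H)=k+1$.
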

Our proof also provides an example of a graph $G$ with $\chi'_c(G)>\chi'(G)$ of maximum degree 3. When $G$ is a connected graph of maximum degree 2, then $G$ is a path or a cycle and it is easy to see that $\chi'_c(G)=\chi'(G)$. 

In the vertex case, $2=\chi(G)=\chi_c(G)$ when $G$ is bipartite \cite{BenevidesCamposDouradoGriffithsMorrisSampaioSilva}. We show that for bipartite graphs optimal connected orderings also exist in the edge case.
\begin{theorem}
\label{thm:ge:bipartite}
If $G$ is bipartite, then $\chi_c'(G)=\chi'(G)$.
\end{theorem}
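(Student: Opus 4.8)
\emph{Proof proposal.} Since $G$ is bipartite, König's theorem gives $\chi'(G)=\Delta(G)=:\Delta$, and since the greedy procedure always outputs a proper edge colouring we have $\chi_c'(G)\ge\chi'(G)$ for every graph. So it suffices to exhibit one connected ordering of $E(G)$ along which greedy uses at most $\Delta$ colours. The mechanism I would use is the following elementary observation: if $\psi$ is a proper $\Delta$-edge-colouring of $G$ and $e_1,\dots,e_m$ is a connected ordering of $E(G)$ such that, for every $k$ and every colour $c<\psi(e_k)$, some earlier edge $e_i$ ($i<k$) incident to $e_k$ has $\psi(e_i)=c$, then an immediate induction shows the greedy colouring following $e_1,\dots,e_m$ equals $\psi$ (when $e_k$ is coloured, all colours below $\psi(e_k)$ are already forbidden, while $\psi(e_k)$ is not, by properness of $\psi$). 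So the task reduces to building such a pair (colouring, connected ordering).

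This pair is easy to obtain when $G$ is $\Delta$-regular. Fix any proper $\Delta$-edge-colouring $\phi$ of $G$ (König again); since $G$ is $\Delta$-regular, each colour class of $\phi$ is a perfect matching, so every vertex meets an edge of each of the $\Delta$ colours. Now grow a connected edge set greedily: start from one edge of colour $1$ and, while not all edges are taken, add an untaken edge incident to the current set whose $\phi$-colour is as small as possible. If $e=uv$ is added and $u$ was already covered, then every colour $c<\phi(e)$ is witnessed among the already-taken edges at $u$: $u$ meets colour $c$ via some edge $h$, and $h$ must be already taken, since otherwise $h$ would be an incident untaken edge of smaller $\phi$-colour. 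Hence the resulting ordering together with $\phi$ has exactly the property above, giving $\chi_c'(G)=\Delta=\chi'(G)$.

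For general bipartite $G$ the plan is to reduce to the regular case. Embed $G$ as a subgraph of a connected $\Delta$-regular bipartite graph $G'$ — for instance by repeatedly replacing the current graph by two copies of it, joining, at each vertex of degree less than $\Delta$, the two copies of that vertex by a single matching edge; after at most $\Delta$ rounds the graph is $\Delta$-regular, and it stays bipartite and connected. Run the construction of the previous paragraph on $G'$, but when choosing the next edge to add give priority to edges of $G$, so that the edges of $G$ are taken first and form connected initial segments; then restrict the ordering to $E(G)$ and restrict $\phi$ to $\psi:=\phi|_G$. The delicate point — and the place where bipartiteness, through König's theorem, must really be exploited — is to guarantee that this restricted ordering is still a valid connected ordering of $G$ along which greedy reproduces $\psi$: the danger is that for some edge $e$ of $G$ the only incident edge realising a colour below $\psi(e)$ is an edge of $G'$ lying outside $G$, so that greedy on $E(G)$ no longer sees that witness. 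Ensuring that every edge of $G$ has all of its lower-colour witnesses inside $G$ (equivalently, choosing the embedding and the $\Delta$-colouring $\phi$ of $G'$ compatibly, so that vertices of $G$ ``see their own colours'' within $G$) is, I expect, the technical heart of the argument — a bookkeeping issue rather than a single clever step, but the one that makes the difference between the bound $\Delta+1$ that holds for all graphs and the bound $\Delta$ claimed here.
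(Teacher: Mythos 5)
Your argument for the $\Delta$-regular bipartite case is correct and self-contained: the growth rule ``always add an incident untaken edge of minimum $\phi$-colour'' does guarantee that at a covered endpoint every colour below $\phi(e)$ already appears on a taken edge, so greedy reproduces $\phi$ along that connected ordering. However, the general case is not a bookkeeping matter, and as written there is a genuine gap exactly where you flag it. The embed-and-restrict plan runs into a dilemma. If you give strict priority to edges of $G$ (which you need, since otherwise the restriction of the ordering to $E(G)$ need not be a connected ordering of $G$ at all -- you may have to leave $G$ through edges of $G'\setminus G$ and re-enter elsewhere), then the minimal-colour selection that drives the witness argument is destroyed: you may be forced to take a $G$-edge of colour $5$ at a vertex whose colour-$2$ edge lies in $G'\setminus G$ and is still untaken, so greedy on $G$ assigns a colour different from $\psi$, the inductive invariant ``greedy equals $\psi$ so far'' collapses, and nothing any longer excludes colour $\Delta+1$ later on. Conversely, demanding that every edge of $G$ find all of its lower-colour witnesses inside $G$, earlier in a connected ordering, is essentially a restatement of the theorem: you are asking for a $\Delta$-edge-colouring of $G$ that is the output of a connected greedy procedure, which is precisely what has to be constructed.

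For comparison, the paper does not go through a regular supergraph at all. It proves by induction on $k$ that for every connected bipartite $G$ with $\chi'(G)\le k$ and every start vertex $v$ there is a connected ordering from $v$ whose greedy colouring uses $k$ colours. The key step is to modify a given $k$-edge-colouring by Kempe changes until $v$ can ``reach'' every vertex (reachability allows crossing an edge of colour $k$ only at a vertex of full degree $k$); bipartiteness enters not via K\H{o}nig but to rule out an odd cycle that would otherwise obstruct one of the Kempe changes. One then colours the components of colours $1,\dots,k-1$ recursively, jumping between them along colour-$k$ edges at saturated vertices. Your regular-case observation is compatible in spirit with this (in a regular graph every vertex is saturated, so the reachability issue is vacuous), but to complete your route you would need an argument of comparable strength for the non-regular case, which the proposal does not supply.
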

The key argument in the proof of Vizing's theorem is to start from a non-optimal colouring and to reconfigure it to decrease the number of colours used. The reconfiguration step used in the proof, the Kempe change, was first introduced by Kempe in his attempt to prove the four colour theorem and since has become a standard and widely used tool to study colourings as it was proven to often be a fruitful approach.

More formally, an \emph{$(i,j)$-Kempe chain} is a connected component of the subgraph induced by the edges coloured $i$ or $j$, and a \emph{Kempe change} consists of switching the colours of the edges in this component. Note that after switching the colours, the colouring is still proper. Moreover, contrary to the case of vertex colouring, when considering edge colouring, Kempe chains have a much more restrained structure as they can only be paths or even cycles.

In Theorem \ref{thm:ge:bipartite}, we use Kempe changes to reconfigure a $k$-edge colouring to a connected greedy $k$-edge colouring. In order to do this we define the notion of `reachability' which might be of independent interest. 
Let $G'$ be the subgraph induced by the edges of colour $<k$. Reachability predicts whether we can `jump' between the components of $G'$ via a connected ordering that assigns the edges between the components colour $k$; by induction, we can find optimal connected orderings for the components of $G'$, which we combine to an optimal connected ordering for $G$.
We can get a similar reachability result for general graphs (Lemma \ref{lem:ge:reach}), of which the following is an easy corollary.
\begin{theorem}
\label{thm:ge:max_degree3}
If $G$ has maximum degree $3$ then $\chi_c'(G)\leq 4$.
\end{theorem}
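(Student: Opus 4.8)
The plan is to derive this from the reachability machinery already advertised: Theorem \ref{thm:ge:bipartite} handles bipartite graphs via Kempe chains together with a notion of reachability between the components of the subgraph $G'$ induced by the edges of colour less than $k$, and the excerpt promises a general-graph version of that reachability statement in Lemma \ref{lem:ge:reach}. So first I would fix a proper $\chi'(G)$-edge-colouring; since $\Delta(G)=3$, Vizing's theorem gives $\chi'(G)\in\{3,4\}$. If $\chi'(G)=4$ there is nothing to prove because $\chi'_c(G)\le\chi'(G)+1$ would give $5$, which is not good enough — so the real content is the case $\chi'(G)=3$, where I must produce a connected greedy ordering using at most $4$ colours.

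Next I would invoke Lemma \ref{lem:ge:reach} on a proper $3$-edge-colouring: let $G'$ be the spanning subgraph formed by the edges of colours $1$ and $2$, and let $k=3$, so that $G'$ has maximum degree $2$ and its components are paths and cycles, each of which trivially admits an optimal connected greedy ordering (with $2$ colours for paths, and $2$ colours for even cycles, $3$ for odd cycles — already at most $3$). The role of reachability is to certify that we can splice these per-component orderings together into a single connected ordering of all of $G$, using one fresh colour (colour $4$, or colour $3$ where it suffices) on the edges that bridge distinct components of $G'$. The point is that each such bridging edge $e$, when it is added, must receive a colour avoiding at most $\Delta(G)-1=2$ colours already present on each of its two endpoints, hence avoiding at most $2\cdot 2 = 4$... — and here the count is tight, which is exactly why the bound is $4$ and not $3$: an edge of $G$ has two endpoints each of degree at most $3$, so at most $2+2=4$ distinct colours are forbidden when it is coloured last among the edges at its endpoints, but in fact when building connectedly one of the endpoints is reached along an already-coloured edge, trimming the count enough that $4$ colours always suffice. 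I would make this precise by checking that Lemma \ref{lem:ge:reach} applies with $k=4$: reachability between components of the $3$-coloured subgraph is guaranteed because any two components of a max-degree-$2$ graph that are joined by an edge of $G$ can be connected through a vertex of degree at most $3$, leaving room for colour $4$.

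The main obstacle I anticipate is purely bookkeeping in the application of Lemma \ref{lem:ge:reach}: I need to confirm that the hypotheses of that lemma (which I am told is a reachability statement for general graphs) are met here, namely that every "jump" between components of $G'$ can indeed be realised by an ordering that gives the connecting edges colour $k=4$ without forcing a fifth colour anywhere, and that the inductive assembly of optimal connected orderings of the components respects the greedy rule globally. Once the hypotheses are verified the corollary is immediate; so the write-up reduces to (i) discarding the trivial $\chi'(G)=4$ case, (ii) taking a proper $3$-colouring and its colour-$\{1,2\}$ subgraph $G'$, (iii) observing the components of $G'$ are paths and cycles with obvious optimal connected orderings, and (iv) quoting Lemma \ref{lem:ge:reach} with $k=4$ to stitch them together, noting that the degree bound $3$ is exactly what makes a single extra colour enough.
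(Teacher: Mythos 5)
There is a genuine gap, at two points. First, your case split is wrong: when $\chi'(G)=4$ there is certainly something to prove, since the generic bound only gives $\chi'_c(G)\le\chi'(G)+1=5$, and your whole construction afterwards assumes a proper $3$-edge-colouring exists, so the class-two subcubic graphs (e.g.\ the Petersen graph, or any subcubic graph with $\chi'=4$) are simply not covered by your argument. The paper's proof never splits on $\chi'(G)$: it works directly with the $(\Delta+1)=4$-edge colouring $\alpha$ supplied by Lemma~\ref{lem:ge:reach}, decomposes $G$ into the $(1,2)$-components of $\alpha$, and never needs $G$ to be class one.

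Second, you treat Lemma~\ref{lem:ge:reach} as a ready-made ``stitching'' lemma with a parameter $k$ that certifies the spliced ordering stays within $4$ colours; that is not what it says. The lemma only asserts the existence of a $4$-edge colouring in which a chosen vertex $v$ can $3$-reach every other vertex; the splicing argument still has to be carried out, and that is exactly the step you defer to ``bookkeeping'' and justify with the count ``$2+2=4$ forbidden colours, trimmed because one endpoint is reached along a coloured edge'' --- which by itself only yields $5$ colours in the worst case, not $4$. The actual argument is: when jumping from a coloured $(1,2)$-component to a new one along $s_1v_2$, the definition of $3$-reachability together with $\Delta=3$ forces $\alpha(s_1v_2)=3$ and forces all other edges at $s_1$ (coloured $1,2$) to be already coloured while $v_2$ has no coloured edge yet, so the bridge greedily receives colour exactly $3$; one then checks the invariant that the partial greedy colouring reproduces the $(1,2)$-components of $\alpha$ and gives colour $3$ precisely to $\alpha$-colour-$3$ edges, so that the leftover edges ($\alpha$-colours $3$ and $4$), coloured at the very end, see only colours $\le 3$ on incident edges and hence receive at most $4$. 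Without this invariant (and without the correct statement of the lemma) your step (iv) is an appeal to a result that does not exist in the claimed form, so the proof as proposed does not go through.
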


However, we did not manage to push through the induction argument used in Theorem \ref{thm:ge:bipartite} to provide a full answer to the following problem, which we leave open.

\begin{problem}[Question 3 in \cite{MotaRochaSilva}]
\label{pr:MRS}
Is it true that $\chi'_c(G)\leq \Delta+1$ for each graph $G$ of maximum degree $\Delta$?
\end{problem}
Throughout this paper, we use the short-cut $xy$ for the edge $\{x,y\}$ and write $[n]$ to mean $\{1,\dots,n\}$. We use the notation $N_G(v)$ for the set of vertices adjacent to $v$ in $G$.

\section{Proof of NP-hardness}
\label{sec:ge:NP}
In this section, we prove Theorem \ref{thm:ge:np}.
 We first define some auxiliary constructions.

Let $\Delta \geq 3$ be given. The $\Delta$-dimensional hypercube $Q_\Delta$ with vertex set $\{0,1\}^\Delta$ is $\Delta$-regular and satisfies $\chi'(Q_\Delta)=\Delta$. Indeed, we may reserve a different colour for each `direction' as in Figure \ref{fig:ge:cube}. 
Pick an edge $xy\in E(Q_\Delta)$. Let $Q_\Delta^+$ be the graph with vertex set $V(Q_\Delta^+)=V(Q_\Delta)\cup \{x',y'\}$ and edge set 
\[
E(Q_\Delta^+)=(E(Q_\Delta)\setminus \{xy\})\cup \{xx',yy'\}.
\]
Then $\chi'(Q_\Delta^+)=\Delta$. An example is given in Figure \ref{fig:ge:cube}. 
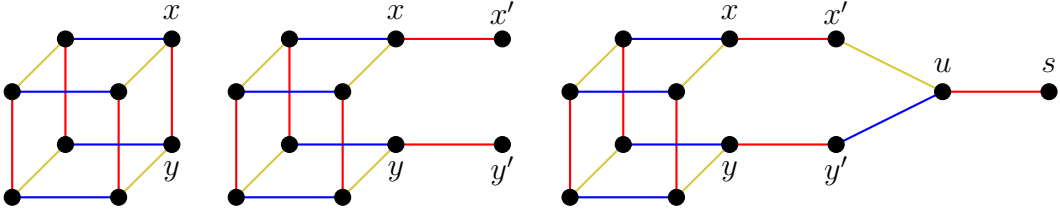
\begin{figure}[htb]
    \centering
    \begin{tikzpicture}[x=0.7cm, y=0.7cm]
        \node[] at (3,3.5){$x$};
        \node[] at (3,.5){$y$};
        
        \draw[thick, red] (0,0)--(0,2) (2,0)--(2,2) (1,1)--(1,3) (3,1)--(3,3);
        \draw[thick, blue] (0,0)--(2,0) (2,2)--(0,2) (1,1)--(3,1) (1,3)--(3,3);
        \draw[thick, yellow!80!black] (0,0)--(1,1) (0,2)--(1,3) (2,0)--(3,1) (2,2)--(3,3);
        
        \draw[ fill=black]  (0,0) circle(3pt);
        \draw[ fill=black]  (1,1) circle(3pt);
        \draw[ fill=black]  (2,0) circle(3pt);
        \draw[ fill=black]  (3,1) circle(3pt);
        \draw[ fill=black]  (2,2) circle(3pt);
        \draw[ fill=black]  (0,2) circle(3pt);
        \draw[ fill=black]  (1,3) circle(3pt);
        \draw[ fill=black]  (3,3) circle(3pt);
        
    \end{tikzpicture}
    \hspace{2mm}
    \begin{tikzpicture}[x=0.7cm, y=0.7cm]
        \node[] at (3,3.5){$x$};
        \node[] at (3,.5){$y$};
        \node[] at (5,3.5){$x'$};
        \node[] at (5,.5){$y'$};
        
        \draw[thick, red] (0,0)--(0,2) (2,0)--(2,2) (1,1)--(1,3)  (3,3)--(5,3) (3,1)--(5,1);
        \draw[thick, blue] (0,0)--(2,0) (2,2)--(0,2) (1,1)--(3,1) (1,3)--(3,3);
        \draw[thick, yellow!80!black] (0,0)--(1,1) (0,2)--(1,3) (2,0)--(3,1) (2,2)--(3,3);
        
        \draw[ fill=black]  (0,0) circle(3pt);
        \draw[ fill=black]  (1,1) circle(3pt);
        \draw[ fill=black]  (2,0) circle(3pt);
        \draw[ fill=black]  (3,1) circle(3pt);
        \draw[ fill=black]  (2,2) circle(3pt);
        \draw[ fill=black]  (0,2) circle(3pt);
        \draw[ fill=black]  (1,3) circle(3pt);
        \draw[ fill=black]  (3,3) circle(3pt);
        \draw[ fill=black]  (5,3) circle(3pt);
        \draw[ fill=black]  (5,1) circle(3pt);
        
    \end{tikzpicture}
    \hspace{2mm}
    \begin{tikzpicture}[x=0.7cm, y=0.7cm]
        \node[] at (3,3.5){$x$};
        \node[] at (3,.5){$y$};
        \node[] at (5,3.5){$x'$};
        \node[] at (5,.5){$y'$};
        \node[] at (7,2.5){$u$};
        \node[] at (9,2.5){$s$};
        
        \draw[thick, red] (0,0)--(0,2) (2,0)--(2,2) (1,1)--(1,3)  (3,3)--(5,3) (3,1)--(5,1) (7,2)--(9,2);
        \draw[thick, blue] (0,0)--(2,0) (2,2)--(0,2) (1,1)--(3,1) (1,3)--(3,3) (5,1)--(7,2);
        \draw[thick, yellow!80!black] (0,0)--(1,1) (0,2)--(1,3) (2,0)--(3,1) (2,2)--(3,3) (5,3)--(7,2);
        
        \draw[ fill=black]  (0,0) circle(3pt);
        \draw[ fill=black]  (1,1) circle(3pt);
        \draw[ fill=black]  (2,0) circle(3pt);
        \draw[ fill=black]  (3,1) circle(3pt);
        \draw[ fill=black]  (2,2) circle(3pt);
        \draw[ fill=black]  (0,2) circle(3pt);
        \draw[ fill=black]  (1,3) circle(3pt);
        \draw[ fill=black]  (3,3) circle(3pt);
        \draw[ fill=black]  (5,3) circle(3pt);
        \draw[ fill=black]  (5,1) circle(3pt);
        \draw[ fill=black]  (7,2) circle(3pt);
        \draw[ fill=black]  (9,2) circle(3pt);
        
    \end{tikzpicture}
    \caption{The graphs $Q_3$, $Q_3^+$ and $H_3$ are depicted with possible $3$-edge colourings.}
    \label{fig:ge:cube}
\end{figure}

Below we consider the situation in which we attempt to extend a colouring in which one of the edges has been precoloured. We assign the lowest available colour to the edges in a connected ordering starting from an edge incident with the precoloured edge.
\begin{lemma}
\label{lem:ge:Qd}
Let $\Delta\geq 3$. Let $xx',yy'\in E(Q_\Delta^+)$ be the two edges containing a vertex of degree 1.
\begin{itemize}
    \item If $\alpha$ is a $\Delta$-edge colouring of $Q_\Delta^+$, then $\alpha(xx')=\alpha(yy')$.
    \item If $xx'$ is precoloured with some colour $i\in [\Delta]$, then there is a connected ordering of the edges of $Q_\Delta^+$ such that the greedy procedure uses $\Delta$ colours.
\end{itemize} 
\end{lemma}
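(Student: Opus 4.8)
The plan is to handle the two items separately: the first is a short parity argument, and the second is an explicit recursive construction of a connected ordering that greedy turns into a (relabelled) ``colour‑by‑direction'' colouring of the hypercube.

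\medskip

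\emph{First item.} Every vertex of $Q_\Delta^+$ other than $x'$ and $y'$ has degree exactly $\Delta$. Hence if $\alpha$ is a $\Delta$‑edge colouring, then for each colour $c\in[\Delta]$ the class $\alpha^{-1}(c)$ is a matching saturating every vertex of $V(Q_\Delta)=V(Q_\Delta^+)\setminus\{x',y'\}$; since $|V(Q_\Delta)|=2^\Delta$ is even, this matching saturates an even number of vertices in total, so it saturates either both or neither of $x'$ and $y'$. As $\alpha^{-1}(c)$ saturates $x'$ precisely when $\alpha(xx')=c$ and saturates $y'$ precisely when $\alpha(yy')=c$, taking $c=\alpha(xx')$ forces $\alpha(yy')=\alpha(xx')$.

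\medskip

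\emph{Second item.} First relabel the $\Delta$ directions of $Q_\Delta$ by $[\Delta]$ so that $xy$ has direction $i$, and let $\beta$ be the colouring of $Q_\Delta^+$ that gives every edge its direction as its colour and gives both $xx'$ and $yy'$ colour $i$; this is a proper $\Delta$‑edge colouring compatible with the precolouring. It suffices to produce a connected ordering of $E(Q_\Delta^+)$ in which $xx'$ comes first and the second edge is incident with $x$, such that the greedy procedure (keeping $xx'$ at colour $i$) produces $\beta$: it then uses the colours $1,\dots,\Delta$, and at least $\Delta$ are needed since $\chi'(Q_\Delta^+)=\Delta$. I would obtain such an ordering recursively, peeling off one coordinate at a time. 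The base ingredient, proved first by a straightforward induction on $m$, is that for every vertex $r$ of $Q_m$ there is a connected ordering of $E(Q_m)$ starting at $r$ whose greedy colouring is the colour‑by‑direction colouring: split $Q_m$ along coordinate $m$, colour the half containing $r$ recursively from $r$, add the coordinate‑$m$ edge at $r$ (all colours $1,\dots,m-1$ already appear at $r$, so greedy gives it $m$), colour the other half recursively from the new endpoint $r\oplus e_m$, and add the remaining coordinate‑$m$ edges last (again forced to colour $m$); coordinate‑$m$ edges never affect the recursive calls because $m$ exceeds every colour used there. Then I would prove by induction on $m\ge 2$ the analogue for $Q_m^+$ (with $xy$ of direction $i$ and $xx'$ precoloured $i$): split along coordinate $m$; if $i=m$, both halves are plain copies of $Q_{m-1}$ carrying a pendant, while if $i<m$ the endpoints of $xy$ lie in the same half, which is then a copy of $Q_{m-1}^+$ handled by the induction (and inside which $yy'$ already gets colour $i$), the other half being a plain $Q_{m-1}$. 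In both cases one builds the half containing $x$ first — with $xx'$ present throughout, which is harmless because its colour $i$ never acts as the unique colour forcing a larger one — then connects the other half through a coordinate‑$m$ edge at an already‑built vertex (necessarily not the now‑absent $xy$), builds it, inserts $yy'$ at the moment its endpoint $y$ already carries all of $1,\dots,i-1$ but no edge of colour $i$, and finishes with the remaining coordinate‑$m$ edges. The base case $m=2$, where $Q_2^+$ is a path on six vertices, is checked directly.

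\medskip

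The routine part is the verification of the greedy colouring: whenever an edge of direction $d$ is added, the colours $1,\dots,d-1$ already appear on an incident edge and $d$ does not, which is exactly what building all lower‑dimensional subcubes before any higher‑coordinate edge guarantees. I expect the main obstacle to be the bookkeeping in the $Q_m^+$ induction, namely (a) checking that the precoloured pendant $xx'$ sitting at $x$ from the outset and the pendant $yy'$ inserted mid‑construction never derail this scheme, and in particular that $yy'$ is inserted at exactly the right time so that greedy assigns it colour $i$ (consistent with the first item), and (b) ensuring the recursion never descends into $Q_1^+$, which is disconnected — this is why the induction bottoms out at $m=2$. Everything else is the same verification as for the plain hypercube.
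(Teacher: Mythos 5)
Your first item is essentially the paper's own parity argument, just phrased via the observation that each colour class of a $\Delta$-edge colouring saturates every degree-$\Delta$ vertex; that part is fine. For the second item you take a genuinely different route. The paper does not build any particular colouring by recursion on the dimension: it fixes an arbitrary $\Delta$-edge colouring $\alpha$ with $\alpha(xx')=i$ (e.g.\ a relabelled direction colouring) and observes that it can always be reproduced greedily by a connected ordering that processes the degree-$\Delta$ vertices one at a time, colouring the edges at the current vertex in increasing order of their target colours; since every non-pendant vertex has degree $\Delta$, the colours $1,\dots,c-1$ are already present at the vertex when its target-$c$ edge is reached, so greedy reproduces $\alpha$ exactly. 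This makes the hypercube structure irrelevant to the second item (only the degrees matter) and yields a three-line argument. Your approach instead constructs the direction colouring explicitly by peeling off coordinates, with a base lemma for $Q_m$ and an induction for $Q_m^+$ split into the cases $i=m$ and $i<m$; I checked the cases and the scheme does go through (the colour-$m$ cross edges and the colour-$i$ pendant never interfere with the lower-colour recursive runs, and $yy'$ can indeed be slotted in when $y$ carries $1,\dots,m-1$, noting that when $i<m$ the edge $yy'$ is handled inside the recursive $Q_{m-1}^+$ call rather than inserted separately, as your wording briefly conflates). What your construction buys is a fully explicit, self-contained ordering tied to the cube's structure; what it costs is exactly the bookkeeping you flag, all of which the paper's ``realize any target colouring by increasing-colour processing at full-degree vertices'' trick avoids — a trick worth internalising, since the paper reuses the same idea again in the proof of Theorem \ref{thm:ge:np}.
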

\begin{proof}
To see the first claim, suppose that we assign $xx'$ and $yy'$ different colours. One of the colour classes must then cover an odd number of vertices from $Q_\Delta$ (because it covers an even number of vertices in $Q_\Delta^+$ as any colour class of an edge colouring of $Q_\Delta^+$ forms a matching). Let $v\in V(Q_\Delta)$ be a vertex not covered by this colour class. Since $v$ has degree $\Delta$, there are edges of $\Delta$ different colours incident to it. Hence we have used at least $\Delta+1$ colours. 

To see the second claim, fix any $\Delta$-edge colouring $\alpha$ of $Q_\Delta^+$ with $\alpha(xx')=i$. Let $z\in \{x,x'\}$ be the vertex of degree $\Delta$. We can now always create an ordering of the edges leading to the edge colouring $\alpha$. Indeed, we first colour the edge incident to $z$ which needs to get colour $1$, then the edge incident to $z$ that needs to get colour $2$, etc, until we coloured all edges incident to $z$. We then pick a neighbour of $z$ of degree $\Delta$ and colour all edges incident to this one in a similar order. We continue like this until all edges have been coloured.
\end{proof}

We will extend $Q_\Delta^+$ into a gadget $H$. Let us first explain the case $\Delta=3$. We obtain the graph $H_3$ from the graph $Q_3^+$ by adding a new vertex $u$ adjacent to the vertices $x'$ and $y'$ as well as adding a new vertex $s$ adjacent to $u$ as in Figure \ref{fig:ge:cube}. Suppose we have a connected greedy 3-edge colouring of $H$ starting from $s$. By Lemma \ref{lem:ge:Qd}, $xx'$ and $yy'$ must get the same colour.
Since $x'u$ and $y'u$ cannot get the same colour, the edges $xx',yy'$ and $su$ must all receive the same colour. Since we started from $s$, some edge from $\{xx',yy'\}$ is the first edge to be coloured from $Q_3^+$. Since $x'$ and $y'$ have degree $2$, this edge will not get colour $3$. If we force the edge $su$ to have colour $3$, and then continue in a connected greedy fashion, then this shows we cannot colour all the edges using three colours. On the other hand, if we force it to have colour $1$ or $2$, then we can continue to colour $x'u$, $xx'$, the remainder of the hypercube and finally $yy'$ and $y'u$ using Lemma \ref{lem:ge:Qd}. This proves the lemma below in the case $\Delta =3$.
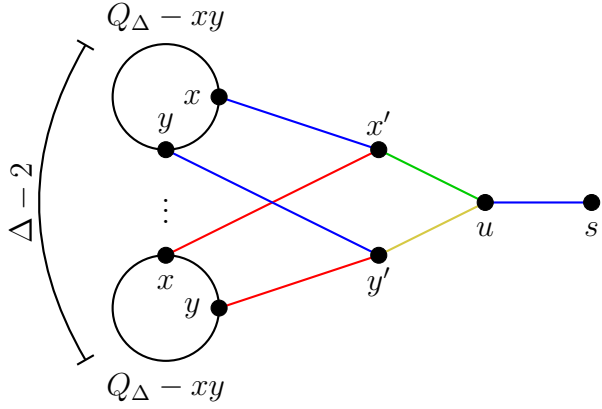
\begin{figure}
    \centering
    \begin{tikzpicture}[x=0.7cm, y=0.7cm]
    \draw[thick, red] (0,1)--(4,3) (1,0)--(4,1);
    \draw[thick, blue] (0,3)--(4,1) (1,4)--(4,3) (6,2)--(8,2);
    \draw[thick, yellow!80!black] (4,1)--(6,2);
    \draw[thick, green!80!black] (4,3)--(6,2);
    
    \draw[thick] (0,0) circle (1);
    \draw[thick] (0,4) circle (1);
    \draw[ fill=black] (0,1) circle (3pt);
    \draw[ fill=black] (1,0) circle (3pt);
    \draw[ fill=black] (0,3) circle (3pt);
    \draw[ fill=black] (1,4) circle (3pt);
    
    \draw[ fill=black] (4,1) circle (3pt);
    \draw[ fill=black] (4,3) circle (3pt);
    \draw[ fill=black] (6,2) circle (3pt);
    \draw[ fill=black] (8,2) circle (3pt);
    
    \node at (0,.5){$x$};
    \node at (.5,0){$y$};
    \node at (0.5,4){$x$};
    \node at (0,3.5){$y$};
    \node at (4,3.5){$x'$};
    \node at (4,.5){$y'$};
    \node at (6,1.5){$u$};
    \node at (8,1.5){$s$};
    
    \node at (0,5.5){$Q_\Delta - xy$};
    \node at (0,-1.5){$Q_\Delta - xy$};
    \node at (0,2){$\vdots$};
    \draw[thick, |-|] (-1.5,-1) to[bend left] (-1.5,5); 
    \node[rotate=90] at (-2.7,2){$\Delta -2$};
    
    \end{tikzpicture}
    \caption{The graph $H$ is depicted with a possible edge colouring.}
    \label{fig:ge:H}
\end{figure}

\begin{lemma}
\label{lem:ge:H}
For any $\Delta \geq 3$, there exists a graph $H$ of maximum degree $\Delta$ with a special vertex $s$ of degree $1$ with the following properties.
\begin{itemize}
    \item If the edge incident with $s$ is precoloured with colour $\Delta$, then there is no connected greedy $\Delta$-edge colouring of $H$ starting from this edge.
    \item If the edge incident with $s$ is precoloured with $i\in [\Delta-1]$, then there exists a connected greedy $\Delta$-edge colouring of $H$ starting from this edge.
\end{itemize}
\end{lemma}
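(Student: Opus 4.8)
The plan is to generalise the construction of $H_3$. Let $H$ consist of $\Delta-2$ pairwise disjoint copies $G_1,\dots,G_{\Delta-2}$ of $Q_\Delta-xy$ (write $x_k,y_k$ for the images of $x,y$ in $G_k$, each of degree $\Delta-1$ in $G_k$), together with four new vertices $x',y',u,s$ and the edges $x_kx'$ and $y_ky'$ for every $k$, plus $x'u$, $y'u$ and $us$; this is the graph in Figure~\ref{fig:ge:H}. A degree count gives $\deg_H(x')=\deg_H(y')=\Delta-1$, $\deg_H(u)=3\le\Delta$, $\deg_H(s)=1$, and every remaining vertex (including the $x_k,y_k$) has degree $\Delta$; moreover $H$ is connected since each $Q_\Delta-xy$ is. For $\Delta=3$ this is exactly $H_3$. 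The point of the construction is that, for each $k$, the subgraph $C_k := G_k\cup\{x_kx',y_ky'\}$ is isomorphic to $Q_\Delta^+$, with $x',y'$ playing the role of the two degree-$1$ vertices.

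I would first record a structural fact about any $\Delta$-edge colouring $\alpha$ of $H$: the edge $us$ receives the same colour as $x_jx'$ (equivalently $y_jy'$) for some $j\in[\Delta-2]$. Indeed, the first part of Lemma~\ref{lem:ge:Qd} applied to each $C_k$ gives $\alpha(x_kx')=\alpha(y_ky')=:c_k$; since $x'$ has degree $\Delta-1$, the colours $c_1,\dots,c_{\Delta-2},\alpha(x'u)$ are pairwise distinct, and symmetrically so are $c_1,\dots,c_{\Delta-2},\alpha(y'u)$, forcing $\{\alpha(x'u),\alpha(y'u)\}=[\Delta]\setminus\{c_1,\dots,c_{\Delta-2}\}$; as $us$ is incident to both edges at $u$, its colour lies in $\{c_1,\dots,c_{\Delta-2}\}$. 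This gives the first bullet at once: if $us$ is precoloured $\Delta$ and a connected greedy $\Delta$-edge colouring starting from $us$ existed, then $\Delta=c_j$ for some $j$; since $C_j$ is attached to the rest of $H$ only at $x'$ and $y'$, the first edge of $C_j$ to be coloured is $x_jx'$ or $y_jy'$, and at that moment its only already-coloured incident edges are among the (at most $\Delta-2$) other edges at $x'$, or among the (at most $\Delta-2$) other edges at $y'$; so the greedy rule assigns it a colour in $[\Delta-1]$, contradicting $c_j=\Delta$.

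For the second bullet, with $us$ precoloured $i\in[\Delta-1]$, I would colour the edges in the following connected order: $us$; then $x'u$, which is forced to the smallest colour $a\ne i$, so $a\in\{1,2\}$; then, for $k=1,\dots,\Delta-2$ in turn, first the edge $x_kx'$ --- forced to $c_k:=\min\bigl([\Delta]\setminus\{a,c_1,\dots,c_{k-1}\}\bigr)$, since at that point the only coloured edges at $x'$ are $x'u,x_1x',\dots,x_{k-1}x'$ and no edge of $G_k$ is yet coloured --- and then the remaining edges of $C_k$ in the connected order provided by the second part of Lemma~\ref{lem:ge:Qd} (treating $x_kx'$ as precoloured $c_k$), which uses only colours in $[\Delta]$ and, by the first part of that lemma, ends with $y_ky'$ coloured $c_k$; and finally the edge $y'u$. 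One checks that $\{a,c_1,\dots,c_{\Delta-2}\}=[\Delta-1]$, so when $y'u$ is reached its coloured neighbours carry exactly the colours $\{i,a,c_1,\dots,c_{\Delta-2}\}=[\Delta-1]$ and it receives colour $\Delta$; hence the whole procedure uses exactly the $\Delta$ colours of $[\Delta]$.

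The hard part is making the second bullet airtight: one must check that splicing, for each $k$, the ordering of Lemma~\ref{lem:ge:Qd} for $C_k$ into the global ordering does not change the greedy colours of the edges of $C_k$. This works because $C_k$ meets the rest of $H$ only through the degree-$1$ vertices $x',y'$ of its copy of $Q_\Delta^+$: an internal edge of $G_k$ sees no edge outside $C_k$, and when $x_kx'$ (respectively $y_ky'$) is coloured the extra forbidden colours contributed by previously coloured edges at $x'$ (respectively $y'$) are exactly $\{a,c_1,\dots,c_{k-1}\}$ (respectively $\{c_1,\dots,c_{k-1}\}$), which --- since $c_k$ is by definition the smallest colour avoiding $\{a,c_1,\dots,c_{k-1}\}$ and all colours below $c_k$ are already forbidden within the copy --- leave the greedy choice equal to $c_k$. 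I would carry out this verification together with the elementary computation $\{a,c_1,\dots,c_{\Delta-2}\}=[\Delta-1]$ (which follows because $c_1,c_2,\dots$ are chosen greedily as the smallest colours avoiding $a\in\{1,2\}$).
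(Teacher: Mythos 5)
Your proposal is correct and follows essentially the same route as the paper: the same gadget ($\Delta-2$ copies of $Q_\Delta^+$ glued at $x',y'$, plus $u$ and $s$), the same pigeonhole observation that $\alpha(su)$ must coincide with $\alpha(x_jx')=\alpha(y_jy')$ for some copy, the same "first edge of the copy is $x_jx'$ or $y_jy'$ and cannot receive colour $\Delta$" argument for the negative direction, and the same extension via Lemma~\ref{lem:ge:Qd} for the positive direction. Your write-up just makes the splicing of the orderings and the computation $\{a,c_1,\dots,c_{\Delta-2}\}=[\Delta-1]$ more explicit than the paper does.
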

\begin{proof}
We extend $\Delta-2$ copies of $Q_\Delta^+$ to the graph $H$. We first glue all these copies on their respective vertices labelled $x'$ and $y'$. We then obtain the graph $H$ by adding a new vertex $u$ adjacent to the (merged) vertices $x'$ and $y'$ and a new vertex $s$ adjacent to $u$ (see Figure \ref{fig:ge:H}).

Let $\alpha$ be a $\Delta$-edge colouring. Since $\alpha(x'u)\neq \alpha(y'u)$, we find that there exists a $Q_\Delta^+$ copy for which $\alpha(xx')=\alpha(yy')=\alpha(su)$, where $x$ and $y$ are the vertices in this copy adjacent to $x'$ and $y'$ respectively. If we start the colouring from an edge incident to $u$, then one of the edges in $\{xx',yy'\}$ is the first edge to be coloured from $Q_\Delta^+$; since $x'$ has degree $\Delta-1$, this edge will not get colour $\Delta$. Combined with Lemma \ref{lem:ge:Qd}, this shows that no connected $\Delta$-edge colouring starting from $su$ can exist in which the edge $su$ is precoloured $\Delta$. 

On the other hand, if $su$ gets a colour strictly smaller than $\Delta$, then we first may colour $x'u$, then all edges incident to $x'$, and finally by Lemma \ref{lem:ge:Qd} we can further extend the connected ordering in such a way that all copies of $Q_\Delta^+$ are $\Delta$-edge coloured while no edge incident with $y'$ receives colour $\Delta$. So we have at least one colour leftover for $y'u$ (which will in fact need to get colour $\Delta$).
\end{proof}
We are now ready to show that it is NP-hard to decide whether $\chi'(G)=\chi_c(G)$ on the class of graphs of maximum degree $\Delta$, for all $\Delta \geq 4$.
\begin{proof}[Proof of Theorem \ref{thm:ge:np}]
Let $d=\Delta-1$, and let $G$ be an $n$-vertex $d$-regular graph. We transform $G$ into a graph $G'$ of maximum degree $\Delta$ such that $\chi'(G)=d$ if and only if $\chi'_c(G')=\chi'(G')$. In fact, $\chi'(G')=\Delta$ and $|V(G')|\leq \Delta^22^\Delta n$. This reduction proves the theorem since deciding whether $\chi'(G)=d$ is NP-hard on $d$-regular graphs for all $d\geq 3$, as shown by Leven and Galil \cite{Leven83Galil}. 

\begin{figure}
    \centering
    \begin{tikzpicture}[x=0.7cm, y=0.7cm]
    \draw[thick] (-.2, .5)--(1.5,0)--(-.2,-.5) (1.5,0)--(3,0);
    
    \draw[thick, blue] (-2,0) circle (4);
    
    \node at (-2,0){$\vdots$};
    \draw[thick, |-|] (-3,-3) to[bend left] (-3,3); 
    \node[rotate=90] at (-4.5,0){$\Delta -1$};
    
    \node at (-.5,1){$u$};
    \node at (-.5,-1){$u$};
    \node at (-1.5,1.5){$H-s$};
    \node at (-1.5,-1.5){$H-s$};
    \node at (1.5,.5){$s$};
    \node at (3,.5){$v$};
    \node[blue] at (1.5,-3){$G_v$};
    
    \draw[rotate=40, thick] (0,2) ellipse (1 and 1.5);
    \draw[rotate=320, thick] (0,-2) ellipse (1 and 1.5);
    
    \draw[fill=black] (1.5,0) circle(3pt);
    \draw[fill=black] (3,0) circle(3pt);
    \draw[fill=black] (-.2,0.5) circle(3pt);
    \draw[fill=black] (-.2,-0.5) circle(3pt);
    \end{tikzpicture}
    \caption{We create an instance of the depicted graph for each vertex of $G$.}
    \label{fig:ge:Gv}
\end{figure}
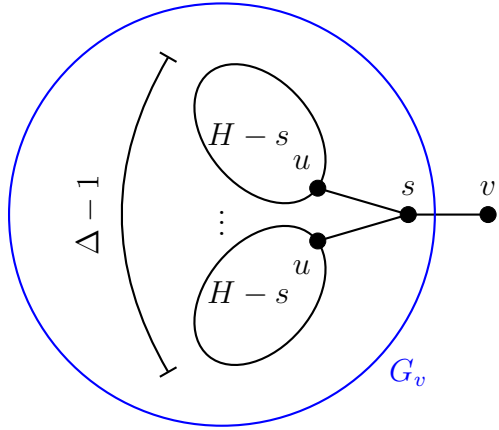
Let $\Delta=d+1$ and let $H$ be the graph from Lemma \ref{lem:ge:H}. For each $v\in V(G)$, we create a graph $G_v$ by merging $\Delta-1$ copies of $H$ on their special vertex $s$ (see Figure \ref{fig:ge:Gv}). The graph $G'$ is obtained from $G$ by connecting $G_v$ to $v$ via an edge for each $v\in V(G)$; for $v,v'$ distinct vertices of $G$, the graphs $G_v$ and $G_{v'}$ are disjoint and have no edges between them. Note that $\chi'(G')=\Delta$. 

Suppose first that our $d$-regular graph $G$ can be coloured using $d$ colours. Fix a $d$-edge colouring $\alpha$ of $G$. There is a connected ordering of the edges of $G$ that results in the edge colouring $\alpha$. Indeed, since $G$ is $d$-regular, whenever we have `reached' a vertex we can assign the edges incident to this vertex the desired colours, starting from the edge coloured $1$, continuing with the edge coloured $2$ etc. We may then colour the edge from $v$ to $G_v$ with colour $d+1$ for all $v\in V(G)$. Continuing in the various copies of $H$, the corresponding edge $su$ gets a colour $<d+1=\Delta$ and hence by Lemma \ref{lem:ge:H} there is a connected ordering in which we can edge colour these with $\Delta$ colours. So $\chi_c'(G')=\chi'(G')$.

Suppose now that $G$ is not $d$-edge colourable. For contradiction, suppose there is a $\Delta$-edge colouring $\alpha$ that can be obtained via a connected ordering. Since $G$ is not $d$-edge colourable, $\alpha(vv')=d+1$ for some $vv'\in E(G)$. 
The two edges between $v,v'$ and $G_v,G_{v'}$ are then not coloured $\Delta$.
As $G_v$ and $G_{v'}$ are not connected to each other, we may assume that these edge are coloured before any of the edges in $G_v$ are coloured.
Since $s$ has degree $\Delta$, there is then a copy of $H$ with vertex $u$ connecting to $s$ in $G_v$ for which $su$ has colour $\Delta$ and this is the first edge of $H$ that is coloured; this contradicts Lemma \ref{lem:ge:H}. So $\chi'_c(G')>\chi'(G')$.
\end{proof}
To obtain a graph $G$ of maximum degree 3 with $\chi_c'(G)>\chi'(G)$, we may take five copies of the graph $H$ from Lemma \ref{lem:ge:H} with $\Delta=3$ and identify their vertices $s$ with the vertices on a $5$-cycle. The resulting graph still has maximum degree 3. Any greedy connected $3$-edge colouring has exactly one edge of colour $3$ on the $5$-cycle; but this means three copies of $H$ have their edge $su$ coloured with colour $3$, and for at least two this is the first edge to be coloured in $H$. This gives a contradiction with  Lemma \ref{lem:ge:H}. Hence $\chi'_c(G)>3=\chi'(G)$.

\section{Bipartite graphs}
\label{sec:ge:bipartite}
Theorem \ref{thm:ge:bipartite} is an immediate consequence of the following lemma.
\begin{lemma}
Let $G$ be a connected bipartite graph with $\chi'(G)\leq k$. Then for any vertex $v\in V(G)$, there exists a connected ordering starting from $v$ leading to a $k$-edge colouring of $G$.
\end{lemma}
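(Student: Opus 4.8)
### Proof proposal

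The plan is to induct on the number of colours $k$. Let me first normalize: let $G'$ be the subgraph of $G$ spanned by the edges of colour $<k$ in some fixed proper $k$-edge colouring $\alpha$ (we can take $\alpha$ with $\chi'(G)\le k$ colours). The components $C_1,\dots,C_m$ of $G'$ are themselves bipartite and admit proper edge colourings with at most $k-1$ colours, so by the induction hypothesis each $C_i$ has, for every choice of starting vertex in $C_i$, a connected ordering achieving a $(k-1)$-edge colouring. The idea is then to stitch these together: we want to traverse the components $C_1,\dots,C_m$ in some order, where the edges of $G$ joining one component to the next are coloured $k$ by the greedy procedure, and the moment we enter a new component we run its inductively-guaranteed connected ordering from the vertex where we entered. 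This is exactly the ``reachability'' notion the authors allude to — we need that from a vertex $v$ we can ``jump'' to all the components we need, using only colour-$k$ edges, in a way compatible with a connected ordering.

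Here is the skeleton I would carry out. First, starting from the given vertex $v$, let $C_1$ be its component in $G'$; run the connected ordering for $C_1$ from $v$ (induction), colouring everything inside $C_1$ with colours $<k$. Now I need to argue that every edge of $G$ with exactly one endpoint in $C_1$ — the ``boundary'' edges — either is coloured $k$ already or can be made to receive colour $k$ by the greedy rule, and that these boundary edges reach enough of the remaining components to eventually cover all of $V(G)$. The crucial structural fact is that at a vertex $w\in C_1$, the edges inside $C_1$ use only colours from $[k-1]$, so a boundary edge at $w$ that is next in the ordering sees at most $k-1$ distinct colours on $w$ and hence greedily takes some colour in $[k]$; we want to force it to $k$. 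This is where bipartiteness enters decisively: in a bipartite graph $\chi'=\Delta$, and one can use the colour $k$ edges of $\alpha$ together with parity/matching arguments (as in the proof of Lemma~\ref{lem:ge:Qd}) to control which colour a boundary edge must get, or to Kempe-swap to arrange it. Concretely, I would use $(i,k)$-Kempe chains — which in an edge colouring are paths or even cycles — to move colour $k$ onto a chosen boundary edge without disturbing the already-coloured interior of $C_1$, since a Kempe chain touching $C_1$ can be chosen to avoid the interior or to be re-absorbed.

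The inductive combination step then runs as follows: maintain a set $S$ of already-processed components whose union is connected in $G$ (via colour-$k$ boundary edges plus the internal orderings); as long as $S\ne\{C_1,\dots,C_m\}$, connectivity of $G$ gives a boundary edge $e=w w'$ with $w$ in a processed component and $w'$ in an unprocessed component $C_j$; colour $e$ with colour $k$ (arranging this as above), then run $C_j$'s connected ordering from $w'$; this keeps the processed region connected and strictly enlarges $S$. When every component has been processed, every edge has been assigned a colour in $[k]$ and the ordering is connected and starts at $v$. I should double-check two bookkeeping points: that only colour-$k$ edges ever appear between distinct components of $G'$ (true, since by definition $G'$ collects all edges of colour $<k$), and that forcing a boundary edge to colour $k$ never creates a conflict — it cannot, since colour $k$ is a fresh colour not used inside any $C_i$.

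The main obstacle is the middle step: guaranteeing that each boundary edge we select can actually be forced to receive colour $k$ under the greedy rule at the moment we colour it, despite the other $k-1$ colours possibly all appearing at its endpoint $w$ inside the processed region. In general graphs this can fail (that is precisely what the NP-hardness construction exploits), so the proof must genuinely use bipartiteness — most plausibly via the fact that a proper $(k-1)$-edge colouring of a bipartite component can be chosen, or Kempe-adjusted, so that at the single entry vertex $w$ of each component at least one colour of $[k-1]$ is missing, leaving room for the exiting boundary edge to be something other than $k$ only if we want it to be, and otherwise letting us push colour $k$ there. Handling this uniformly — choosing the orderings of the components and the Kempe swaps so that entry and exit vertices never ``use up'' all $k-1$ low colours simultaneously — is the delicate heart of the argument and is where I expect to spend the real effort; everything else is routine induction and connectivity bookkeeping.
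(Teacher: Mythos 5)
Your skeleton is the same as the paper's: induction on $k$, decomposition of $G$ by the subgraph of edges coloured $<k$ into components, running the inductively guaranteed connected ordering inside each component from an arbitrary entry vertex, and stitching components together by edges that the greedy rule colours $k$. However, the step you yourself call ``the delicate heart'' is precisely the paper's main content, and your proposal does not supply it; it is a genuine gap, not bookkeeping. Note first that your framing of the obstacle is inverted: at the moment a boundary edge $s w'$ is coloured, its endpoint $w'$ in the fresh component has no coloured edges yet, so the greedy colour is automatically at most $k$; the danger is that it receives a colour \emph{smaller} than $k$, which destroys the component structure of $\alpha$ and can later push some of the remaining between-component edges beyond colour $k$. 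Forcing the jump edge to get exactly $k$ cannot be arranged by locally Kempe-adjusting the $(k-1)$-colouring of a component so that a low colour is missing at the entry vertex (your suggested mechanism); the colours seen at the exit vertex are whatever the greedy procedure produced, and the correct condition is the opposite one: the exit vertex $s_i$ must have degree exactly $k$, so that after its component is greedily $(k-1)$-coloured its other $k-1$ edges carry all of $[k-1]$ and the jump edge is forced to colour $k$.

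What the paper actually proves, and what your argument is missing, is that the global colouring $\alpha$ can be chosen \emph{in advance} so that such degree-$k$ jump vertices exist in a suitable order for every component. This is formalised via reachability: $u$ strongly reaches $w$ if $uw\in E(G)$ and either $\alpha(uw)<k$ or $\deg(u)=k$, and one shows that some $k$-edge colouring lets $v$ reach every vertex, by taking a colouring maximising the reachable set $A$ and deriving a contradiction through a case analysis of Kempe changes. Bipartiteness is used exactly once in that analysis, and not where you place it: when an $(x,k)$-chain leaving $A$ re-enters $A$ at a vertex $s'$, one wants to free colour $x$ at $s'$ by an $(x,y)$-swap inside $A$, and the only obstruction is that this $(x,y)$-chain might end at the original vertex $s$; a parity count of the two paths between $s$ and $s'$ shows this would close an odd cycle, impossible in a bipartite graph. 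Without this reachability lemma (or an equivalent device producing the ordered sequence of degree-$k$ exit vertices), the induction in your proposal cannot be closed.
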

\begin{proof}
We prove the lemma by induction on $k$. If $G$ is a connected graph with $\chi'(G)=1$, then $G$ is a single edge. Hence the lemma is true for $k=1$. Suppose now that we have proven the lemma for all $k'<k$ for some integer $k\geq 2$. 

Let $\alpha:E(G)\to [k]$ be a $k$-edge colouring of $G$ and let $u,v\in V(G)$. For $u,v$ distinct, we say $u$ \emph{strongly reaches} $v$ in the colouring $\alpha$ if $uv\in E(G)$ and either $\alpha(uv)<k$ or the degree of $u$ is $k$. Each vertex strongly reaches itself. We now define reachability as the transitive closure of strong reachability: we say $u$ \emph{reaches} $v$ in the colouring $\alpha$ if there is a sequence $u=v_0,v_1\dots,v_\ell=v$ of vertices in $G$ such that $v_{i-1}$ strongly reaches $v_i$ for all $i\in [\ell]$. 

We first show that for every vertex $v$, there exists a $k$-edge colouring of $G$ such that $v$ reaches all vertices of $G$ in this colouring. Take a $k$-edge colouring $\alpha$ of $G$ which maximises the number of vertices that $v$ can reach. Suppose that $v$ cannot yet reach all vertices. We will strictly increase the set of vertices that $v$ can reach through a series of Kempe changes. 

Let $A\subseteq V(G)$ be the set of vertices that $v$ can reach in $\alpha$ and let $B=V(G)\setminus A$. Note that as $v$ reaches itself, $v\in A$. Since $G$ is connected, there must be an edge $su$ from some $s\in A$ to some $u\in B$. By the definition of strong reachability, we find that $s$ has degree strictly smaller than $k$ and that $\alpha(su)=k$. Hence $s$ misses a colour $x\in [k-1]$, that is, it has no edge incident of colour $x$.

Suppose first that $u$ has degree $<k$.
If vertex $u$ misses colour $x$ as well, then the edge $su$ forms a $(k,x)$-component on its own and a $(k,x)$-Kempe change switches the colour of $su$ to $x$. This adds the vertex $u$ to the set of vertices that $v$ can reach, increasing the set of vertices $v$ can reach as desired. Hence we may assume that $u$ misses some colour $y$ but does not miss colour $x$. Then $y<k$ and there is some edge $e$ incident to $u$ coloured $x$. Since all edges between $A$ and $B$ are coloured $k$, the $(x,y)$-component of $e$ stays within $G[B]$. Hence we may perform an $(x,y)$-Kempe change on this component without affecting the set of vertices that $v$ can reach. Now we are back in the case in which $u$ and $s$ both miss colour $x$, which we already handled.

Suppose now that vertex $u$ has degree $k$. Let $e$ denote the edge coloured $x$ incident to $u$. Note that the $(x,k)$-component $C$ of $e$ is a path (of which one endvertex is $s$). If it stays within $G[B]$, then performing an $(x,k)$-Kempe change on $e$ recolours $su$ with colour $x$ without affecting the colours in $G[A]$ and hence strictly enlarges the set of vertices that $v$ can reach. So we may assume that $C$ intersects $A$ a second time, say $s'\in A$ is the vertex closest to $s$ in the path $C$. Since $s'$ has an edge incident with $B$, we find that it has degree $<k$. Hence it has some colour $y<k$ missing. Once we ensure $x$ is missing at $s'$, we can do an $(x,k)$-Kempe change on the component of $e$ and strictly increase the set of vertices that $v$ can reach. 

If $s'$ has an edge incident with colour $x$, then consider the $(x,y)$-component of this edge. This has to stay within $A$ and performing a Kempe change on it will not affect the set of vertices that $v$ can reach since $x,y<k$. The only problem is that this chain $C'$ could include the vertex $s$. Here is where we use that the graph is bipartite: as can be seen in Figure \ref{fig:ge:odd}, this would create an odd cycle in the graph, since there is an odd number of edges in $C$ between $s$ and $s'$ and an even number of edges in $C'$ between $s$ and $s'$ (since they have different colours missing). Hence we may perform the $(x,y)$-switch without affecting the missing colour of $s$, and can then perform the $(x,k)$-switch as desired.
\begin{figure}
    \centering
    \begin{tikzpicture}[x=1.5cm, y=1cm]
    \clip (-3.5,-3) rectangle (3.5,3);
    \draw[] (-5.5,0) ellipse (5 and 3);
    \draw[] (5.5,0) ellipse (5 and 3);
    
    \draw[thick, red] (0.8,-1)--(-.8,-1) (0.8,1)--(-.8,1) (1.5,.5)--(1.5,-.5);
    \draw[thick, blue] (.8,1)--(1.5,.5) (.8,-1)--(1.5,-.5) (-.8,-1)--(-1.5,-.5) (-1.5,.5)--(-2.5,0);
    \draw[thick, green!80!black] (-2.5,0)--(-1.5,-.5) (-1.5,.5)--(-.8,1);
    
    \draw[fill=black] (0.8,-1) circle (3pt);
    \draw[fill=black] (-.8,-1) circle (3pt);
    \draw[fill=black] (0.8,1) circle (3pt);
    \draw[fill=black] (-.8,1) circle (3pt);
    \draw[fill=black] (1.5,.5) circle (3pt);
    \draw[fill=black] (1.5,-.5) circle (3pt);
    \draw[fill=black] (-1.5,.5) circle (3pt);
    \draw[fill=black] (-2.5,0) circle (3pt);
    \draw[fill=black] (-1.5,-.5) circle (3pt);
    
    \node[] at (-.8,1.4){$s$};
    \node[] at (-.8,-1.4){$s'$};
    \node[] at (.8,1.4){$u$};
    
    \node[red] at (0,-1.3){$k$};
    \node[red] at (0,1.3){$k$};
    \node[red] at (1.8,0){$k$};
    
    \node[blue] at (1.2,1){$x$};
    \node[blue] at (1.2,-1){$x$};
    \node[blue] at (-1.2,-1){$x$};
    \node[blue] at (-2.,.5){$x$};
    \node[green!80!black] at (-2.,-.5){$y$};
    \node[green!80!black] at (-1.2,1){$y$};
    \node[green!80!black] at (-.8,-.6){$(y)$};
    \node[blue] at (-.8,.6){$(x)$};

    \node[] at (-1.5,-1.5){$A$};
    \node[] at (2.,-1.5){$B$};
    \end{tikzpicture}
    \caption{If the $(x,y)$-chain of $s'$ includes $s$, then $G$ contains an odd cycle.}
    \label{fig:ge:odd}
\end{figure}
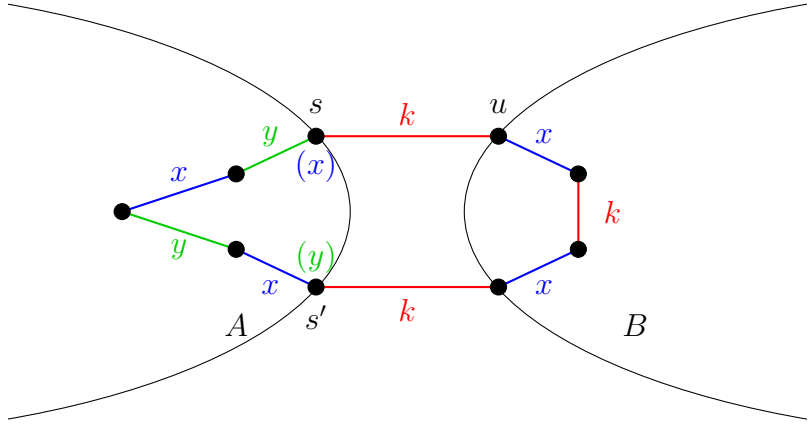

This shows we can always strictly increase the set of vertices that $v$ can reach. This contradicts the maximality of $\alpha$. Hence there exists a colouring $\alpha$ in which $v$ can reach all vertices.

Let $C_1,\dots,C_\ell$ denote the connected components of $G$ when we remove all edges of $G$ coloured $k$ in $\alpha$, where $v\in C_1$. We will show that there is a connected ordering starting from $v$ that leads to a $k$-edge colouring of $G$ which is a  $(k-1)$-edge colouring when restricted to any $C_i$.
Since $v$ can reach everything in $\alpha$, after possibly renumbering $C_2,\dots,C_\ell$, we can find vertices
\[
s_i\in C_1\cup \dots \cup C_i \text{ and }v_{i+1}\in C_{i+1} \cap N_G(s_i),
\]
for $i=1,\dots, \ell$, such that for all $i\in [\ell-1]$, $s_i$ can strongly reach $v_{i+1}$ (`reach in one step') and hence $d_G(s_i)=k$ (since we already know $\alpha(s_iv_{i+1})=k$ by the definition of the components). 

Since $C_1$ is a connected bipartite graph with $\chi'(C_1)\leq k-1$, there exists a connected ordering starting from $v$ that $(k-1)$-edge colours $C_1$ by the induction hypothesis.
By the definition of the components, all edges incident to $s_1$ except for $s_1v_2$ have now been coloured. We colour the edge $s_1v_2$ next; this obtains colour $k$. Since $C_2$ is a connected bipartite graph with $\chi'(C_2)\leq k-1$, there exists a connected ordering starting from $v_2$ that $(k-1)$-edge colours $C_2$ by the induction hypothesis. We extend our previous ordering by this connected ordering and continue like this until we have coloured all edges within the components. We then colour the edges between the components; since colour $k$ will always be available to them, they will all receive a colour at most $k$.
\end{proof}

\section{Subcubic graphs}
\label{sec:ge:max_degree3}
Let $G$ be a graph, let $\alpha$ be a $k$-edge colouring of $G$ and let $i\in [k]$. We say that a vertex $v\in V(G)$ can \emph{$i$-reach} another vertex $w\in V(G)$ in $\alpha$ if there exists a sequence of vertices $v=v_0,v_1,v_2, \dots, v_\ell=w$ of $G$ such that for all $j\in [\ell]$ there is an edge $v_jv_{j+1}\in E(G)$ and one of the following holds:
\begin{itemize}
    \item $\alpha(v_jv_{j+1})<i$;
    \item $v_j$ has incident edges in colours $1,2,\dots,\alpha(v_jv_{j+1})$.
\end{itemize}
If $k=i=\Delta$ the maximum degree of $G$, then this reduces to the notion of reachability from the proof of Theorem \ref{thm:ge:bipartite}. 
The proof of Theorem \ref{thm:ge:max_degree3} follows from the lemma below which might be of independent interest.
\begin{lemma}
\label{lem:ge:reach}
Suppose $G$ is a graph with maximum degree $\Delta$ and $v \in V(G)$. Then $G$ has an $(\Delta+1)$-edge colouring $\alpha$ such that $v$ can $\Delta$-reach all other vertices of $G$ in $\alpha$. 
\end{lemma}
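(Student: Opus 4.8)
The plan is to run the same kind of extremal/Kempe-change argument as in the proof of Theorem~\ref{thm:ge:bipartite}, with colour $\Delta+1$ playing the role of a ``spare'' colour that removes the need for bipartiteness. Call a vertex $w$ \emph{full} if $d_G(w)=\Delta$ and no edge of colour $\Delta+1$ is incident to $w$; equivalently, $w$ meets exactly one edge of each colour $1,\dots,\Delta$. For a $(\Delta+1)$-edge colouring $\alpha$, a step $w\to w'$ of a $\Delta$-reaching sequence is legal precisely when $\alpha(ww')<\Delta$ or $w$ is full. I would fix a $(\Delta+1)$-edge colouring $\alpha$ of $G$ maximising the set $A$ of vertices that $v$ can $\Delta$-reach, and suppose for contradiction that $B:=V(G)\setminus A$ is nonempty. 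Since $G$ is connected there is an edge $su$ with $s\in A$ and $u\in B$; because $s$ does not reach $u$, the vertex $s$ is not full and $\alpha(su)\in\{\Delta,\Delta+1\}$. In fact every $A$--$B$ edge has colour in $\{\Delta,\Delta+1\}$ and its $A$-endpoint is not full, and every full vertex of $A$ has all of its neighbours in $A$.

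I would use two facts repeatedly. (i) A Kempe switch on a chain whose two colours both lie in $[\Delta-1]$ does not change the reach structure at all: every edge keeps a colour $<\Delta$, and every vertex keeps its colour multiset up to swapping two colours $<\Delta$, so fullness is preserved everywhere; such switches are ``free''. (ii) Recolouring one $A$--$B$ edge $ab$ (with $a\in A$, $b\in B$) from its big colour to a colour $c\in[\Delta-1]$ that is missing at both $a$ and $b$ never shrinks $A$: only the edge $ab$ changes, $a$ can only become full, and afterwards $a$ reaches $b$. So it suffices to create such a configuration. The crucial observation is that $s$ and $u$ lie in \emph{different} components of the subgraph of $G$ spanned by the edges of colour $<\Delta$, since any path joining them there would contain an $A$--$B$ edge. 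Hence, provided $s$ misses some colour $x\in[\Delta-1]$ and $u$ misses some colour $y\in[\Delta-1]$, either $x=y$, or $s$ already misses $y$, or the $(x,y)$-chain through the $y$-edge at $s$ avoids $u$ and, once switched, leaves $s$ missing $y$ as well; in every case $x$ or $y$ becomes a colour of $[\Delta-1]$ missing at both ends of $su$, and recolouring $su$ to it lets $v$ reach $u$, contradicting maximality.

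It remains to treat the situation in which $s$ or $u$ uses all of $[\Delta-1]$. A short degree count shows: $s$ uses all of $[\Delta-1]$ only if $d_G(s)=\Delta$, $\alpha(su)=\Delta+1$, and $s$ misses exactly colour $\Delta$; and $u$ uses all of $[\Delta-1]$ only if $d_G(u)=\Delta$ and either $\alpha(su)=\Delta$ with $u$ full, or $\alpha(su)=\Delta+1$ with $u$ missing exactly colour $\Delta$. In all of these the aim is either to make $s$ full (then $s$ reaches $u$) or to make $u$ miss a colour of $[\Delta-1]$ (then we are back in the previous paragraph, noting that there $s$ does miss a colour of $[\Delta-1]$ whenever $\alpha(su)=\Delta$). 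Two easy instances: if both $s$ and $u$ miss $\Delta$, recolour $su$ from $\Delta+1$ to $\Delta$ and $s$ becomes full; and if $\alpha(su)=\Delta$, $u$ full, and some colour-$<\Delta$ neighbour $w$ of $u$ misses $\Delta+1$, then recolouring $uw$ to $\Delta+1$ (an edge inside $B$) makes $u$ miss a colour of $[\Delta-1]$. The general tool for the remaining cases is to switch the $(\Delta,\Delta+1)$-chain $C_0$ through $su$ (or, symmetrically, to recolour a small edge at $u$ up to $\Delta$ to free a small colour at $u$). Since a $\Delta$-reaching step never uses a colour-$(\Delta+1)$ edge, the interior vertices of $C_0$ (each of which carries a $\Delta+1$-edge, hence is not full) are irrelevant to the switch, while the switch turns $s$ into a full vertex; this is harmless unless the other endpoint $z$ of $C_0$ lies in $A$, is full, and meets $C_0$ along its colour-$\Delta$ edge.

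That last configuration is precisely where bipartiteness entered the proof of Theorem~\ref{thm:ge:bipartite} (there an analogous chain returning to a critical vertex forced an odd cycle), and it is the main obstacle. My plan to dispose of it is to break $C_0$ locally: its penultimate vertex $z^-$ is interior to $C_0$, so it meets both a $\Delta$- and a $\Delta+1$-edge of $C_0$, whence a degree count forces $z^-$ to miss some colour $i\in[\Delta-1]$; switching the $(\Delta,i)$-chain through $z^-z$ should keep $z$ full, delete $z^-z$ from the colour-$\{\Delta,\Delta+1\}$ subgraph, and so turn $z^-$ into an endpoint of a shorter $(\Delta,\Delta+1)$-chain through $su$ whose new non-$s$ endpoint is not full, after which the harmless case applies. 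The delicate point, and the part I expect to cost the most effort, is making this reduction legitimate: the auxiliary $(\Delta,i)$-switch must itself not shrink $A$, and the shortening must be shown to terminate rather than re-create a full endpoint elsewhere. A natural approach is to refine the choice of $\alpha$ with a secondary extremal condition (e.g.\ a suitable monotone count such as the number of edges of colour $\Delta+1$) and argue the bad configuration cannot persist, but pinning down the right condition is itself part of the difficulty.
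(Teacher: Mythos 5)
Your extremal set-up, your ``free switch'' fact for chains with both colours in $[\Delta-1]$, and your main case (both endpoints of the crossing edge $su$ miss a colour of $[\Delta-1]$, using that no $A$--$B$ edge has colour $<\Delta$) are all sound and close in spirit to the paper. But the proof is not complete: the residual cases are exactly where the work lies, and you leave the decisive one open. Concretely, when you switch the $(\Delta,\Delta+1)$-chain $C_0$ through $su$, the configuration in which the other endpoint $z$ of $C_0$ lies in $A$, is full, and meets $C_0$ along its colour-$\Delta$ edge is not disposed of: your proposed repair (an auxiliary $(\Delta,i)$-switch through $z^-z$ with $i$ missing at $z^-$) is itself not ``free'' --- it can recolour edges of colour $i<\Delta$ elsewhere on that chain up to colour $\Delta$, destroying legal reaching steps and possibly shrinking $A$ --- and you explicitly defer both this and the termination of the shortening process to an unspecified secondary extremal condition. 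In addition, the sub-case $\alpha(su)=\Delta$ with $u$ full and every small-coloured neighbour of $u$ already incident to colour $\Delta+1$ is not covered by either of your stated tools (you cannot recolour a small edge at $u$ to $\Delta$, since $u$ already has $\Delta$ on $su$). So there is a genuine gap, and it is the part you yourself flag as unresolved.

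For comparison, the paper's proof is engineered so that this obstacle never appears: it never switches a $(\Delta,\Delta+1)$-chain. It distinguishes whether there is a colour-$\Delta$ edge between $A$ and $B$, introduces $C$ (the $B$-endpoints of such edges) and $D$ (neighbours of $C$), and shows that one of four properties must hold; each is resolved either by an $(x,y)$-switch with $x,y\le\Delta$ that stays inside $B$ (possible in the all-$\Delta+1$ crossing case, which is your case ``$s$ misses only $\Delta$''), by an $(x,y)$-switch with $x,y<\Delta$ confined to $A$, by an $(x,\Delta)$-switch along a path that meets $A$ only in a single non-full vertex, or by a length-two $(y,\Delta)$-path $a$--$c$--$d$. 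In every instance the only vertex of $A$ whose palette changes is known not to be full, so no usable colour-$\Delta$ step of a full vertex is ever lost and maximality of $A$ gives the contradiction directly. Redoing your final cases along those lines (or importing the paper's $C$/$D$ dichotomy) is what is needed to close the argument.
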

\begin{proof}
In this proof, we will omit the $\Delta$ from $\Delta$-reach.
By Vizing's theorem~\cite{vizing1964estimate}, $G$ has at least one $(\Delta+1)$-edge colouring $\alpha$. We choose such a colouring $\alpha$ that maximises the size of the set $A$ of vertices that $v$ can reach in $\alpha$. Let $B=V(G)\setminus A$ be the set of vertices that $v$ cannot reach.  

Suppose that $A\neq V(G)$. 
The edges between $A$ and $B$ are of colour $\Delta$ or $\Delta+1$. Let $C\subseteq B$ be the neighbours of $A$ via edges coloured $\Delta$. Let $D\subseteq B$ the set of vertices adjacent to a vertex in $C$ (which a priori might overlap with $C$).  We claim that we can obtain an edge colouring in which $v$ can reach a strictly larger set of vertices than $A$ (contradicting the maximality of $A$) as soon as one of the following properties holds.
\begin{itemize}
    \item[(1)] $C$ is empty, i.e. there is no $\Delta$-edge between $A$ and $B$.
    \item[(2)] There is an $(x,\Delta)$-Kempe chain with $x<\Delta$ which is a path between a vertex in $A$ and a vertex in $B$.
    \item[(3)] Some $c\in C$ has a colour $x\in [\Delta-1]$ missing.
    \item[(4)] Some $d\in D$ misses colour $\Delta$ or $\Delta+1$.
\end{itemize}
We will prove the claim after we show that we can assume one of $(1)-(4)$ holds. We suppose all properties above do not hold.
Since (1) fails, we know there is an edge from some $a\in A$ to some $c\in C$ (which has colour $\Delta$ by definition of $C$). Since $c$ is not reachable, there is a colour $x<\Delta$ missing at $a$. Since (3) fails, $c$ is incident to an edge $cd$ of colour $x$. As $(4)$ fails, we know that there is some colour $y<\Delta$ missing at $d$. Consider a $(\Delta, y)$-Kempe chain starting at $d$. Since (2) fails, it stays within $B$. After performing the switch, there is a vertex in $D$ with no edge coloured $\Delta$, contradicting with (4) failing.
 
To prove the claim in case (1), suppose that there are no edges coloured $\Delta$ between $A$ and $B$. Since $G$ is connected, there exists an edge from some $a\in A$ to some $b\in B$. Then $\alpha(ab)=\Delta+1$. Let $x<\Delta+1$ be the smallest colour missing at $a$. Since $b$ has the edge $ab$ incident in colour $\Delta+1$, $b$ misses some colour $y<\Delta+1$. We do an $(x,y)$-Kempe change on the component of $b$ (this could be empty). Since all the edges between $A$ and $B$ are coloured $\Delta+1$, this chain stays within $B$. After the switch both $a$ and $b$ miss the colour $x$. We may recolour the edge $ab$ with colour $x$, and now the set of vertices that $v$ can reach has increased (since $v$ can now reach $b$ as well). 
  
To prove the claim in case (2), suppose that some $(x,\Delta)$-chain for $x<\Delta$ starts in $u\in B$ and contains a vertex $s$ from $A$. Let $a\in A$ and $b\in B$ such that $ab$ is the closest edge between $A$ and $B$ in this chain to $s$. As $x<\Delta$, we find $\alpha(ab)=\Delta$. Thus $a$ must have some colour $y<\Delta$ missing (since $b$ cannot be reached). The $(x,y)$-chain starting at $a$ will stay within $A$ and switching it does not affect which vertices $v$ can reach. So we may assume that $x$ is missing at $a$ and the $(x,\Delta)$-component of $a$ is a path between $a$ and $u$ that only intersects $A$ in the vertex $a$. A Kempe change on this component strictly increases the set of vertices that $v$ can reach. 

We now prove the claim assuming (3). Suppose that $c\in C$ has a colour $x <\Delta$ missing. Let $a\in A$ with $\alpha(ac)=\Delta$ (which exists by the definition of $C$). Let $y<\Delta$ be a colour missing at $a$. The $(x,y)$-chain starting at $c$ stays in $B$, and hence we may perform a switch and then recolour $ac$ to $y$ in order to increase the set of vertices that $v$ can reach. 

Finally, we prove the claim from (4). Suppose $d\in D$ misses colour $\Delta$ or $\Delta+1$. Let $c\in C$ be the vertex $d$ is adjacent to. By (3) we are done unless $c$ only has the colour $\Delta+1$ missing. If $\Delta+1$ is missing at $d$, then we recolour $cd$ to colour $\Delta+1$ in order to reduce to (3). So $\Delta$ is missing at $d$.
Let $a\in A$ with $\alpha(ac)=\Delta$. Let $y=\alpha(cd)<\Delta$ and let $x<\Delta$ be a missing colour at $a$. We may perform an $(x,y)$-Kempe change starting at $a$ to ensure that $a$ misses colour $y$. The only vertices on the $(y,\Delta)$-Kempe chain containing $c$ are then $a$ and $d$. After we switch this chain, the set of vertices that $v$ can reach has strictly increased again.
\end{proof}
We are now ready to prove that any graph of maximum degree $\Delta\leq 3$ satisfies $\chi_c'(G)\leq \Delta+1$.
\begin{proof}[Proof of Theorem \ref{thm:ge:max_degree3}]
Let $G$ be a graph of maximum degree 3. Pick a vertex $v\in V(G)$. Let $\alpha$ be a $4$-edge colouring of $G$ in which $v$ can $3$-reach all other vertices of $G$; this exists by the lemma above.

The proof follows the same argument as the second half of the proof of Theorem \ref{thm:ge:bipartite}, now using the fact that any $(1,2)$-component can be $2$-edge coloured in a connected greedy fashion starting from any vertex instead of applying the induction hypothesis. 

Let $C_1$ be the $(1,2)$-component of $v$. After doing a $(1,2)$-Kempe change if needed, we can colour $C_1$ in a connected greedy fashion starting from $v$. If $G$ has more components, then since $v$ can $3$-reach all other vertices, there must be a $(1,2)$-component $C_2\neq C_1$ and vertices $v_2\in C_2$ and $s_1\in C_1$ such that $s_1v_2 \in E(G)$, and either $\alpha(s_1v_2)<3$ or $s_1$ has incident edges in colours $1,\dots,\alpha(s_1v_2)$ in $\alpha$. Since $s_1$ and $v_2$ are in different $(1,2)$-components, we conclude the latter holds. Since $G$ has maximum degree $3$, it follows that $\alpha(s_1v_2)=3$. Hence all edges incident to $s_1$ have been coloured apart from $s_1v_2$, which we put next in the connected ordering. After performing a $(1,2)$-switch if needed, we $2$-edge colour the edges of $C_2$ in a connected greedy fashion starting from $v_2$. (Note that there might be no edges to colour in this step, as the component might consist of only $v_2$.) As long as the edges of some $(1,2)$-component have not been coloured, we can continue the connected ordering in a similar fashion. The resulting (partial) colouring has the same $(1,2)$-components as $\alpha$ and coloured an edge $3$ if and only if it has colour $3$ in $\alpha$. We finish the connected ordering by first colouring the edges coloured $3$ by $\alpha$ and then the edges coloured $4$ by $\alpha$; all these edges receive a colour at most $4$.
\end{proof}
\paragraph{Acknowledgements} We are grateful to LaBRI for providing us with a great working environment during the time at which the research was conducted. The second and sixth author are thankful for the funding of the ANR Project GrR (\textsc{ANR-18-CE40-0032}) that made their visit possible. 

\bibliographystyle{plain}
\bibliography{greedyedge}
\end{document}